\theoremstyle{plain}
\newtheorem{theorem}{Theorem}[section]
\newtheorem{lemma}[theorem]{Lemma}
\newtheorem{corollary}[theorem]{Corollary}
\theoremstyle{definition}
\newtheorem{definition}[theorem]{Definition}
\newtheorem{example}[theorem]{Example}
\title{\textbf{Further constructions of square integer relative Heffter arrays}}
\author{\textbf{Diane Donovan}}
\author{\textbf{Sarah Lawson}\thanks{sarah.lawson@uq.edu.au}}
\author{\textbf{James Lefevre}}
\affil{School of Mathematics and Physics, The University of Queensland, Brisbane, Queensland Australia
ARC Centre of Excellence, Plant Success in Nature and Agriculture, St. Lucia, Queensland, Australia}
\date{}
\begin{document}

\maketitle

\begin{abstract}
A square integer relative Heffter array is an $n \times n$ array whose rows and columns sum to zero, each row and each column has exactly $k$ entries and either $x$ or $-x$ appears in the array for every $x \in \mathbb{Z}_{2nk+t}\setminus J$, where $J$ is a subgroup of size $t$. There are many open problems regarding the existence of these arrays. In this paper we construct two new infinite families of these arrays with the additional property that they are strippable. These constructions complete the existence theory for square integer relative Heffter arrays in the case where $k=3$ and $n$ is prime.  
 
\end{abstract}

\section{Introduction}

Consider $\mathbb{Z}_v=\left\{0, \pm1, \dots, \pm\frac{(v-1)}{2}\right\}$ for some odd integer $v$. A \textit{halfset} $S \subset \mathbb{Z}_v$ is a subset of size $\frac{v-1}{2}$ where, for each $x \in \mathbb{Z}_v \setminus\{0\}$, either $x$ or $-x$ is contained in $S$. A \textit{Heffter system $D(v,k)$}, is a partition of a half set $S$ into blocks of size $k$ such that the block elements sum to $0 \pmod{v}$ \cite{Archdeacon2015}. 
A \textit{Heffter array} denoted $H(m,n;h,k)$, is an $m \times n$ array with the rows forming a \textit{row Heffter system} $D_h=D(2mh+1,h)$, and the columns forming a \textit{column Heffter system} $D_k=D(2nk+1,k)$ with nk=mh. The very first introduction of these objects in \cite{Archdeacon2015} defined these systems as \textit{orthogonal} when each block in $D_h$ intersected with each block in $D_k$ in \textit{exactly} one element thus $h=n$ and $k=m$. However, it is now more commonly accepted that two Heffter systems are orthogonal when each block in $D_h$ intersects with each block in $D_k$ in \textit{at most} one element. This allows for empty cells in the Heffter array. The former definition is now reserved for \textit{tight} Heffter arrays. \par 
A Heffter array therefore meets three key conditions;  each row and column in the array has $h$ and $k$ filled cells respectively, each row and column sums to $0 \pmod{2nk+1}$ and the array entries form a half set of $\mathbb{Z}_{2nk+1}$. 
A \textit{square} Heffter array, $H(n;k)$, occurs when $m=n$ (thus $h=k$) and an \textit{integer} Heffter array is such that the rows and columns sum to zero in $\mathbb{Z}$. In this paper we use the prefix $I$ to denote the integer property.  
 
 \begin{example}    A square integer Heffter array $IH(4;3)$ over $\mathbb{Z}_{25}$.        
    \label{fig:IH_{1}(4;4)}
   \begin{table}[H]
     \centering 
          \begin{tabular}{|p{0.6cm}|p{0.6cm}|p{0.6cm}|p{0.6cm}|}\hline
                 -9&1&8&\\ \hline
                 2&-12&&10 \\ \hline
                 &11&-5&-6 \\ \hline
                 7&&-3&-4 \\ \hline
                 
           \end{tabular}    
    \end{table}
    \centering
   
\end{example}
 
In this current work we consider \textit{relative} Heffter arrays which were introduced in \cite{Costa2020b}. We now state these formally.

\par
\begin{definition} \cite{Costa2020b} 
\label{def:RHA}
Let $v=2nk+t$ be a positive integer, where $t$ divides $2nk$, and let $J$ be the subgroup of $\mathbb{Z}_v$ of order $t$. A $H_t(m,n;h,k)$ Heffter array over $\mathbb{Z}_v$ relative to $J$ is an $m \times n$ partially filled array with elements in $\mathbb{Z}_v$ such that:
\begin{enumerate}
    \item each row contains h filled cells and each column contains k filled cells;
    \item for every $x \in \mathbb{Z}_v \setminus J$, either $x$ or $-x$ appears in the array;
    \item the elements in every row and column sum to 0 in $\mathbb{Z}_v$.
\end{enumerate}
\end{definition}

 The rows (columns) of a relative Heffter array form a \textit{row (column) relative Heffter system} $D_r(v,h) \ (D_c(v,k))$ partitioning a half set of $\mathbb{Z}_v \setminus J $ into blocks of size $h$ ($k$) such that the block elements sum to $0 \pmod{v}$. It is easy to see that setting $t=1$ in Definition \ref{def:RHA} yields the \textit{classical} definition of a Heffter array. \par

The set of absolute values of the entries in a  Heffter array is called the \textit{support} of the array. With slight abuse of notation, the term \textit{support} is also used to denote the set of absolute values of the entries in any given set. 
For $t$ even, the support of a relative Heffter array is the set $\{0, 1, \cdots , nk +\frac{t}{2}\}  \setminus \{0,  (\frac{2nk}{t}+1), 2 (\frac{2nk}{t}+1), \dots, \frac{t}{2}(\frac{2nk}{t}+1)\}$. For $t$ odd, the support is $\{0, 1, \cdots ,  (nk+\frac{t-1}{2})\} \setminus
\{0, (\frac{2nk}{t}+1), 2 (\frac{2nk}{t}+1), \dots, \frac{t-1}{2}(\frac{2nk}{t}+1)\}$. 
 It is useful to note that an $IH_2(m,n;h,k)$ and $IH(m,n;h,k)$ have the same support thus are equivalent. \par

The relative Heffter arrays that we consider in this work are all square and integer and we use the notation $IH_t(n;k)$. 
\begin{example} The following array is an $IH_4(6;3)$ over $\mathbb{Z}_{40}$ relative to $J=\{0, \pm10, 20\}$ with support $\{1, \cdots,9,11,\cdots,19\}$: 
   \begin{table}[H]
     \centering 
     \begin{adjustbox}{width=5cm,center}
          \begin{tabular}{|c|c|c|c|c|c|}\hline
                 -9 & 17 &   &   &   & -8\\ \hline
                 -5 & 1 & 4 &   &   &  \\ \hline
                   & -18 & 11 & 7 &   &  \\ \hline
                   &   & -15 & 12 & 3 &  \\ \hline
                   &   &   & -19 & 13 & 6\\ \hline
                 14 &   &   &   & -16 & 2\\ \hline
           \end{tabular}
           \end{adjustbox}
    \end{table}
    \centering
           \label{fig:IH_{4}(6;3)_v2}
\end{example}

Over the past decade many variations of Heffter arrays have been studied. The interested reader is referred to the 2022 survey \cite{Pasotti2024} for an overview. Recently, \textit{Heffter spaces}, equivalent to an arbitrary number of orthogonal Heffter systems, have been introduced \cite{Buratti2024} and further extended to \textit{relative Heffter spaces} \cite{Johnson2025}. However, there are still many open cases regarding the existence of relative Heffter arrays (or equivalently Heffter spaces with 2 orthogonal Heffter systems). \par
We say parameters $n,k$ are admissible if $3 \leq k\leq n$. The existence of an $IH_t(n;k)$ is completely solved for $t=1,2$ for all admissible parameters \cite{Dinitz2017a, Archdeacon2015a} but there are many unsolved cases for $t >2$. In \cite{Costa2020b}, necessary conditions for the existence of an $IH_t(n;k)$ were presented. 

\begin{theorem}\cite{Costa2020b}
\label{th:IH_t(n;k)_neccesary}
 Suppose that there exists an $IH_t(n;k)$.
 \begin{enumerate}
     \item If t divides $nk$, then $nk \equiv 0 \pmod{4} \text{ or } nk \equiv -t \equiv \pm 1\pmod{4}.$
     \item If $t =2nk$, then k must be even.
     \item If t $\neq  2nk$ does not divide $nk$, then $t+2nk \equiv 0 \pmod{8}.$
 \end{enumerate}
\end{theorem}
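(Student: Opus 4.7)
The plan is to extract necessary conditions from the elementary observation that each row of an integer Heffter array sums to zero in $\mathbb{Z}$; this forces the sum of the absolute values in each row (which is twice the sum of the positive entries) to be even, and hence the total sum $S$ of the absolute values in the array must be even. Since $S$ equals the sum of the support, each of the three conditions will follow from a parity analysis of an explicit formula for $S$.

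First I would compute $S$ in closed form from the support description given in the introduction. For $t$ even, using $v = 2nk+t$, $nk + t/2 = v/2$, and $2nk/t+1 = v/t$, a short telescoping yields $S = v\cdot nk/4$. For $t$ odd, a parallel computation yields $S = m[t^2(2m+1) + 1]/4$ where $m = nk/t$ (only used when $t\mid nk$). In the exceptional case $t = 2nk$, the support collapses to the odd integers $\{1,3,\dots,2nk-1\}$ and is handled directly.

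The three parts of the theorem are then addressed separately. Part (2) is immediate: if $t=2nk$ then every entry of the array is odd in absolute value, so a row of $k$ such entries summing to zero forces $k$ to be even. For part (1), writing $nk=mt$: if $t$ is even then $S = mt^2(2m+1)/4$, and $S$ even becomes $8\mid mt^2$, which reduces to $nk\equiv 0\pmod 4$; if $t$ is odd then $t^2\equiv 1\pmod 8$ allows $S$ even to be rewritten as $4\mid m(m+1)$, i.e.\ $m\equiv 0$ or $3\pmod 4$, which translates via $nk=mt$ and $t\equiv\pm 1\pmod 4$ to $nk\equiv 0$ or $nk\equiv -t\pmod 4$. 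For part (3), $t\mid 2nk$ but $t\nmid nk$ forces $v_2(t)=v_2(nk)+1$, so $t$ is even and the formula $S=v\cdot nk/4$ applies; a short $2$-adic analysis then shows that $S$ being an even integer is equivalent to $v\equiv 0\pmod 8$, i.e.\ $t+2nk\equiv 0\pmod 8$.

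The main obstacle is the $2$-adic bookkeeping in part (3), where one must verify that the \emph{global} condition $8\mid v\cdot nk$ is equivalent to the stronger-looking \emph{single-factor} condition $8\mid v$. This rests on the observation that when $t\mid 2nk$ but $t\nmid nk$, setting $a=v_2(nk)$ gives $v_2(t)=a+1$ and, after writing $v=2^{a+1}(nk/2^a + t/2^{a+1})$ with both bracketed summands odd, $v_2(v)\ge a+2$. Hence when $nk$ is even ($a\ge 1$) the bound $v_2(v)\ge 3$ is automatic, while when $nk$ is odd the constraint $v_2(v\cdot nk)\ge 3$ reduces exactly to $v\equiv 0\pmod 8$. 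Small instances such as the $IH_4(6;3)$ displayed earlier (with $v=40\equiv 0\pmod 8$) serve as a sanity check that no case of the theorem is being missed.
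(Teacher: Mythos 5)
This theorem is imported from \cite{Costa2019} and the paper gives no proof of it, so there is nothing internal to compare against; judged on its own, your argument is correct and is in fact the standard one used in the cited source: each row of an integer array summing to zero forces the sum of absolute values in that row to be even, hence the sum $S$ of the support is even, and the closed forms $S=v\cdot nk/4$ ($t$ even) and $S=m\bigl(t^2(2m+1)+1\bigr)/4$ ($t$ odd, $m=nk/t$) check out, as do the parity/valuation reductions in all three parts (including the observation that $t\nmid nk$, $t\mid 2nk$ forces $v_2(t)=v_2(nk)+1$ and hence $v_2(v)\ge v_2(nk)+2$). No gaps; this is a faithful reconstruction of the proof in the reference rather than a new route.
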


Following from this, the authors gave an almost complete proof for the existence of an $IH_k(n;k)$ with the only open case being when $k=5$ and $n\equiv 0 \pmod{4}$. 

 \par Fewer complete results exist for when $t \neq k$. A summary of known results is given in Table \ref{tab:summary_existence_r_sq_int_HA} for admissible parameters. Recall that $t$ must also divide $2nk$. It should be noted that the table does not take into account specific cases of small $t,n,k$ which can be either excluded or constructed through computation.

\begin{figure}[H]
    \label{tab:summary_existence_r_sq_int_HA}
\begin{table}[H]

    \centering
     
\begin{tabular}{|l|l|l|p{5cm}|l|}
\hline
\textbf{$k$} & \textbf{$t$} & \textbf{$n$} & \textbf{Result} & \textbf{Paper} \\
\hline

3 & $n,2n$ & $n$ odd & There exists an $IH_{n}(n;3)$ and an $IH_{2n}(n;3)$ & \cite{Costa2020} \\
\hline

3 & $3n$ & $n$ odd & There is no $IH_{3n}(n;3)$ & \cite{Costa2020b} \\
\hline

3 & 8 & 4 & There is no $IH_8(4;3)$ & \cite{Costa2020b} \\
\hline

$5$ & $5$ & $3 \pmod{4}$ & There exists an $IH_5(n; 5)$ & \cite{Costa2020b} \\
\hline

$5$ & $5$ &  $1, 2 \pmod{4}$ & There is no $IH_5(n; 5)$  & \cite{Costa2020b} \\
\hline

$0 \pmod{4}$ & $t$ & $n$ & There exists an $IH_t(n;k)$ & \cite{Morini2020} \\
\hline

$2 \pmod{4}$ & $t$ & $n$ even & There exists an $IH_t(n;k)$ & \cite{Morini2020} \\
\hline

all $k$ & $1,2$ & all $n$ & There exists an $IH(n;k)$ if and only if $nk \equiv 0,3 \pmod{4}$ & \cite{Dinitz2017a,Archdeacon2015a} \\
\hline

all $k \neq 5$ & $k$ & all $n$ & There exists an $IH_k(n; k)$ if and only if one of the following holds:
\begin{itemize}
    \item $k$ is odd and $n \equiv 0, 3 \pmod{4}$;
    \item $k \equiv 2 \pmod{4}$ and $n$ is even;
    \item $k \equiv 0 \pmod{4}$.
\end{itemize} & \cite{Costa2020b} \\
\hline

\end{tabular}
    \caption{Known existence of an $IH_t(n;k)$}
\end{table}
  
\end{figure}

 Despite being the smallest possible value for $k$, the existence of an $IH_t(n;k)$ for $k=3$ is largely unsolved. It is this case that is the subject of this paper. Corollary \ref{cor:neccesary_for_IH_t(n;3)} provides necessary conditions for the existence of an $IH_t(n;3)$.

\begin{corollary} of Theorem \ref{th:IH_t(n;k)_neccesary} 
\label{cor:neccesary_for_IH_t(n;3)} 

Suppose there exists an $IH_t(n;3)$. 
 \begin{enumerate}
        \item If $t$ divides $3n$, then $n \equiv 0 \pmod{4}$, $n,t \equiv 1 \pmod {4}$ or $n,t \equiv 3 \pmod {4}$.
        \item If $t$ does not divide $3n$, then $t \equiv 2n \pmod{8}$ and  $t \neq 6n$.

\end{enumerate}
\end{corollary}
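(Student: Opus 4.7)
The plan is to deduce the corollary directly from Theorem \ref{th:IH_t(n;k)_neccesary} by substituting $k=3$ and examining each of its three cases, reducing the resulting congruences modulo $4$ or modulo $8$ as needed.

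For part (1), I assume $t \mid 3n$ so that case (1) of Theorem \ref{th:IH_t(n;k)_neccesary} applies with $nk = 3n$. The condition $3n \equiv 0 \pmod 4$ is equivalent to $n \equiv 0 \pmod 4$ since $\gcd(3,4)=1$. Otherwise we must have $3n \equiv -t \equiv \pm 1 \pmod 4$, which forces $t$ to be odd. I then split on the residue of $t$ modulo $4$: if $t \equiv 1 \pmod 4$ then $-t \equiv 3 \pmod 4$ and solving $3n \equiv 3 \pmod 4$ gives $n \equiv 1 \pmod 4$, while if $t \equiv 3 \pmod 4$ then $-t \equiv 1 \pmod 4$ and solving $3n \equiv 1 \pmod 4$ gives $n \equiv 3 \pmod 4$. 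This yields exactly the three possibilities listed.

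For part (2), I assume $t \nmid 3n$, so cases (2) or (3) of Theorem \ref{th:IH_t(n;k)_neccesary} must apply. Case (2) requires $k$ to be even, which fails since $k=3$; hence $t \neq 2nk = 6n$. The only remaining option is case (3), which yields $t + 6n \equiv 0 \pmod 8$. Reducing $-6n \equiv 2n \pmod 8$ gives $t \equiv 2n \pmod 8$, as required.

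There is no real obstacle here; the corollary is a routine modular-arithmetic unpacking of the theorem specialised to $k=3$. The only point requiring any care is making sure the condition $nk \equiv -t \equiv \pm 1 \pmod 4$ in case (1) is read as the conjunction of $nk \equiv -t \pmod 4$ and $t$ odd, so that the case split on $t \pmod 4$ is exhaustive and the conclusion $n,t$ lie in the same residue class modulo $4$ emerges cleanly.
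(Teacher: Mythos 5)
Your proposal is correct and follows essentially the same route as the paper: specialise Theorem \ref{th:IH_t(n;k)_neccesary} to $k=3$ and unpack the congruences, with the $t\neq 6n$ exclusion coming from $k$ being odd. Your version merely spells out the case split on $t \bmod 4$ that the paper leaves implicit.
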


\begin{proof}
 Assume an $IH_t(n;3)$ exists and that $t$ divides $3n$. From Theorem \ref{th:IH_t(n;k)_neccesary} either $3n \equiv 0 \pmod{4}$ or  $3n \equiv -t \equiv \pm 1 \pmod{4}$ thus either $n \equiv 0 \pmod{4}$ or $n,t \equiv 3 \pmod {4}$ or $n,t \equiv 1 \pmod {4}$ as required.

    Since $k$ is odd then clearly from Theorem \ref{th:IH_t(n;k)_neccesary}, $t \not=6n$. If we assume $t$ does not divide $3n$ then $t+6n \equiv 0 \pmod{8}$ and it follows that $t \equiv 2n \pmod {8}$.
\end{proof}

 In this paper we construct two new infinite families of integer relative Heffter arrays when $k=3$ with $t=6$ and $n \equiv 0,3 \pmod{4}$. These constructions are key to the main theorem of this paper, Theorem \ref{th:IH_t(n;3)_n_prime_exists}.

 \begin{theorem} 
\label{th:IH_t(n;3)_n_prime_exists}
    An $IH_t(n;3)$ with prime $n\geq 3$ exists for
    \begin{enumerate}
        \item  $n\equiv 1 \pmod{4}$ if and only if $t=1,2,n,2n$, and 
        \item  $n\equiv 3 \pmod{4}$ if and only if $t=3,6,n,2n$.
    \end{enumerate}
    
    \label{prop:IHt(n;3) n 1,3 prime existance}
\end{theorem}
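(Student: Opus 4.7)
The plan is to establish the necessary direction by a direct enumeration over the divisors of $6n$, and then to obtain sufficiency by assembling the two new constructions of this paper with existence results already recorded in Table \ref{tab:summary_existence_r_sq_int_HA}.

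For necessity, since $n$ is an odd prime the divisors of $2nk = 6n$ are exactly $\{1,2,3,6,n,2n,3n,6n\}$, and the definition of an $IH_t(n;3)$ forces $t$ to lie in this set. The values $t=6n$ and $t=3n$ are eliminated at once: $t=6n$ is forbidden by clause (2) of Corollary \ref{cor:neccesary_for_IH_t(n;3)}, and $t=3n$ fails clause (1) for every odd prime $n$ (since $3n \equiv 3 \pmod 4$ when $n\equiv 1$ and $3n \equiv 1 \pmod 4$ when $n\equiv 3$). For each remaining candidate I would apply Corollary \ref{cor:neccesary_for_IH_t(n;3)} directly, splitting on whether $t \mid 3n$: for $t \in \{1,3,n\}$ I check $t \equiv n \pmod 4$, and for $t \in \{2,6,2n\}$ I check $t \equiv 2n \pmod 8$. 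This routine case split yields exactly $\{1,2,n,2n\}$ when $n \equiv 1 \pmod 4$ and $\{3,6,n,2n\}$ when $n \equiv 3 \pmod 4$.

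For sufficiency, I would dispatch each admissible $t$ to the appropriate construction. When $n \equiv 1 \pmod 4$ we have $3n \equiv 3 \pmod 4$, so a classical integer Heffter array $IH(n;3)$ exists by \cite{Dinitz2017a,Archdeacon2015a}; this handles $t=1$ and, via the equivalence of $IH_2(n;3)$ with $IH(n;3)$ noted after Definition \ref{def:RHA}, also $t=2$. When $n \equiv 3 \pmod 4$ the case $t=3$ is the $t=k$ setting with $k=3$, supplied by the last row of Table \ref{tab:summary_existence_r_sq_int_HA} since $n \equiv 3 \pmod 4$. The cases $t=n$ and $t=2n$ in both residue classes come from the first row of the table \cite{Costa2020}, and finally $t=6$ for $n \equiv 3 \pmod 4$ is exactly the content of the new infinite family announced in the introduction.

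The main obstacle is thus not the theorem itself but the construction that underpins it: building an $IH_6(n;3)$ for every $n \equiv 3 \pmod 4$. Once that family is in hand, the reduction above is pure bookkeeping against Corollary \ref{cor:neccesary_for_IH_t(n;3)} and the existence table, and all the genuine technical effort has been pushed into the two new infinite families constructed in the body of the paper.
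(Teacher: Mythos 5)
Your proposal is correct and follows essentially the same route as the paper: restrict $t$ to the divisors of $6n$, eliminate the inadmissible values via the necessary conditions, and then cite the known constructions together with the new $IH_6(n;3)$ family for $n\equiv 3\pmod 4$. The only (harmless) difference is that you rule out each bad value of $t$ directly from Corollary \ref{cor:neccesary_for_IH_t(n;3)}, whereas the paper cites prior non-existence results and Theorem \ref{th:IH_6(n;3)_iff_n=0,3} for those eliminations; your version is, if anything, more self-contained, and your residue checks ($t\equiv n\pmod 4$ when $t\mid 3n$, $t\equiv 2n\pmod 8$ otherwise) are all accurate.
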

  Given an $n \times n$ Heffter array, H, if there exists a transversal of a set with support $\{1, \dots, n\}$ then we say $H$ has a \textit{primary transversal} \cite{Archdeacon2015a}. An array is \textit{shiftable} if it has the same number of positive and negative entries in each row and column \cite{Archdeacon2015a}. If a primary transversal can be removed from a Heffter array leaving  a shiftable array then we say the Heffter array is \textit{strippable} \cite{Archdeacon2015a}. In Section \ref{sec:k=3} we consider the existence of a strippable $IH_t(n;3)$. In Section \ref{sec:t=6} we give new constructions for a strippable $IH_6(n;3)$. We then conclude with the proof of Theorem \ref{th:IH_t(n;3)_n_prime_exists}.

\section{Strippable square integer relative Heffter arrays for \texorpdfstring{$k=3$} .
} 
\label{sec:k=3}
The following lemma tightens the necessary conditions for the existence of an $IH_t(n;k)$ that contains a primary transversal.

\begin{lemma}
\label{lem:n<2nk+t+1}
If an $IH_t(n;k)$ with a primary transversal exists then  $\frac{2nk}{n-1}>t$.
\end{lemma}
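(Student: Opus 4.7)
The plan is to compare two pieces of information about the array: (a) the explicit description of the support of an $IH_t(n;k)$ given just after Definition~\ref{def:RHA}, and (b) the definition of a primary transversal, which forces $\{1,2,\dots,n\}$ to be a subset of that support.

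First I would record that, in both the even and odd $t$ cases, the support of an $IH_t(n;k)$ is obtained from $\{1,2,\dots,nk+\lfloor t/2 \rfloor\}$ by deleting the positive multiples of $\frac{2nk}{t}+1$ that lie in this range. In particular, the smallest positive integer that is \emph{excluded} from the support is exactly
\[
d \;:=\; \tfrac{2nk}{t}+1.
\]
Everything in $\{1,2,\dots,d-1\}$ lies in the support, and $d$ itself does not.

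Next, by the definition of a primary transversal, the $n$ entries of the transversal have absolute values exactly $1,2,\dots,n$, so each of $1,2,\dots,n$ must appear as the absolute value of some entry in the array. Hence $\{1,2,\dots,n\}$ is contained in the support, which forces $n < d$, that is,
\[
n \;<\; \tfrac{2nk}{t}+1.
\]
Rearranging (noting $n-1\geq 2$ since $n\geq k\geq 3$) gives $t(n-1)<2nk$, i.e.\ $\frac{2nk}{n-1}>t$, as required.

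There is really no obstacle here; the only care needed is to verify that the smallest element excluded from the support is indeed $\frac{2nk}{t}+1$ in both parities of $t$, which is immediate from the two explicit descriptions quoted from the paper. The argument is short enough that I would present it directly rather than separating into cases.
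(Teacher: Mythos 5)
Your proof is correct and follows essentially the same route as the paper: both arguments identify $\frac{2nk}{t}+1$ as the smallest positive value missing from the support (the smallest non-zero absolute value in $J$) and observe that a primary transversal forces $\{1,\dots,n\}$ into the support, giving $n<\frac{2nk}{t}+1$ and hence $\frac{2nk}{n-1}>t$. Your version just spells out the support description and the final rearrangement in slightly more detail.
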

\begin{proof}
    Say such an $IH_t(n;k)$ exists then the smallest of the non-zero absolute values of $J$ is  $\frac{2nk}{t}+1$. Thus, for $1, \dots , n$ to exist in the support,  $n<\frac{2nk}{t}+1$ and the result follows. 
\end{proof}

For the remainder of this paper we focus solely on $k=3$.
\begin{lemma}
\label{lem:prim_trans=strippable}
    If an $IH_t(n;3)$ has a primary transversal then it is strippable. 
\end{lemma}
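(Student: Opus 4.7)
The plan is to show that after removing the primary transversal $T$ from $A$, each row and each column has exactly one positive and one negative entry; since the remaining array has $k-1 = 2$ entries per line, this is precisely the shiftable condition.

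The first step is a counting/pigeonhole argument that pins down the cells of $T$. By definition $T$ consists of $n$ cells of $A$ whose entries have absolute values $\{1, 2, \dots, n\}$. Since each value in $\{1, \dots, n\}$ appears at most once in the array (condition 2 of Definition \ref{def:RHA}), the cells of $T$ are precisely the $n$ cells of $A$ whose entries have absolute value at most $n$. Because $T$ is a transversal it meets every row and every column, so with exactly $n$ such cells distributed among $n$ rows (resp.\ columns), each row and each column of $A$ contains exactly one entry of absolute value at most $n$ and two entries of absolute value strictly greater than $n$.

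The second step uses the fact that $A$ is an \emph{integer} Heffter array. Fix any row and label its entries $e_1, e_2, e_3$ so that $e_1$ is the transversal entry, i.e.\ $|e_1| \le n$ and $|e_2|, |e_3| > n$. The integer row sum yields $e_2 + e_3 = -e_1$, so $|e_2 + e_3| = |e_1| \le n$. If $e_2$ and $e_3$ shared a sign then $|e_2 + e_3| = |e_2| + |e_3| > 2n$, a contradiction. Hence $e_2$ and $e_3$ have opposite signs, so removing $e_1$ leaves the row with one positive and one negative entry. The identical argument applied to columns (using the integer column sum) shows the same for each column, so $A \setminus T$ is shiftable and $A$ is strippable.

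I do not expect any serious obstacle here: the two ingredients are the pigeonhole observation isolating the transversal cells and the elementary fact that three integers summing to zero, with one of them of small absolute value, force the other two to have opposite signs. The argument is genuinely special to $k=3$; for larger $k$ the sign distribution of the non-transversal entries would no longer be forced.
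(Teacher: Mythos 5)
Your proof is correct and follows essentially the same route as the paper's: the paper likewise observes that the transversal entry in each row has absolute value in $\{1,\dots,n\}$ and that the zero row sum then forces the other two entries to have opposite signs, with the analogous argument for columns. You have simply made explicit the step the paper leaves implicit, namely that the two non-transversal entries each have absolute value exceeding $n$, which is what rules out their sharing a sign.
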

\begin{proof}
    Say an $IH_t(n;3)$ has a primary transversal. Consider the three entries $\{i,r_{1i},r_{2i}\}$ in row $i$. Since $1 \leq|i|\leq n$ and the row sums to zero then $r_{1i}$ and $r_{2i}$ must have opposite signs. An analogous argument follows for the columns hence the array is strippable.
\end{proof}

\begin{lemma}

If an $IH_t(n;3)$ is strippable then $t \leq 6$.
\end{lemma}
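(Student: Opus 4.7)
The plan is to combine the upper bound from Lemma \ref{lem:n<2nk+t+1} with the divisibility constraint $t \mid 2nk$ built into Definition \ref{def:RHA}. Strippability, as just defined, forces the array to contain a primary transversal, so Lemma \ref{lem:n<2nk+t+1} applies with $k=3$ and yields $t < \frac{6n}{n-1}$.

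The remainder of the argument is a short arithmetic contradiction. Suppose $t \geq 7$. Since $t \mid 6n$, the ratio $6n/t$ is a positive integer, and the inequality $t \geq 7 > 6$ forces $6n/t < n$. Being a positive integer strictly less than $n$, it satisfies $6n/t \leq n-1$, which rearranges to $t \geq \frac{6n}{n-1}$. This contradicts the strict inequality obtained in the first step, and therefore $t \leq 6$.

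There is essentially no obstacle here: the two constraints become tight precisely at $t=6$ (where $6n/t = n$), and any strictly larger integer divisor of $6n$ drives $6n/t$ strictly below $n$, which is exactly what the primary-transversal bound forbids. In particular, no small-$n$ case analysis is required, and the lemma falls out as an immediate corollary of Lemma \ref{lem:n<2nk+t+1} combined with the divisibility hypothesis on $t$.
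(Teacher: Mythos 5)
Your proof is correct. It uses the same two ingredients as the paper's proof --- the primary-transversal bound of Lemma \ref{lem:n<2nk+t+1} (with $k=3$, giving $t<\frac{6n}{n-1}$) and the divisibility requirement $t \mid 2nk=6n$ from Definition \ref{def:RHA} --- but you combine them into a single uniform argument: if $t\geq 7$ then the integer $6n/t$ is strictly less than $n$, hence at most $n-1$, hence $t\geq\frac{6n}{n-1}$, contradicting the transversal bound. The paper instead applies the inequality alone to conclude $t\leq 6$ for $n\geq 7$, and then handles $n=3,4,5,6$ by checking directly that $t=7$ (and $t=8$ when $n=3$) fails to divide $6n$. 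The two routes are logically equivalent, but yours buys a cleaner presentation with no case analysis on small $n$; the paper's version makes the role of the small cases explicit at the cost of an ad hoc check. Either is acceptable; your observation that the bound is tight exactly at $t=6$ is a nice way to see why no further casework is needed.
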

\begin{proof}
    Assume a strippable $IH_t(n;3)$ exists, then by definition it has a primary transversal. For all $n\geq 7$, we have $t \leq 6$ from Lemma \ref{lem:n<2nk+t+1}. If $n=4,5,6$, we have $t\leq 7$ however $t = 7$ does not divide $2nk$ when $n = 4,5,6$. Similarly, if $n=3$ then neither $t =7, 8$ divides $2nk$. Thus for all $n$ we have $t \leq 6$.
\end{proof}

We now consider the existence of a strippable $IH_t(n;3)$ for each $t \leq6$. For $t=1,2$, we know that an $IH(n;3)$ exists if and only if $nk \equiv 0,3 \pmod{4}$  \cite{Archdeacon2015a}. The constructs previously provided in \cite{Archdeacon2015a} for these parameters already yield strippable arrays. We also know that an $IH_3(n;3)$ exists if and only if $n \equiv 0,3 \pmod{4}$ \cite{Costa2020b} and again the constructs provided already yield strippable arrays. This leaves the existence of a strippable $IH_t(n;3)$ open when $t=4,5,6$. We now briefly comment on the cases $t=4,5$ then provide detailed discussion and constructions for $t=6$ in Section \ref{sec:t=6}.

Corollary \ref{cor:neccesary_for_IH_t(n;3)} gives the necessary conditions that an $IH_4(n;3)$ exists only if $n\equiv 0,2 \pmod{4}$. If we restrict our search to strippable arrays then we easily see this is not a sufficient condition.  
\begin{lemma}
    There does not exist a strippable $IH_4(4;3)$. 
\end{lemma}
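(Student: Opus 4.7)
The plan is to argue by contradiction: suppose a strippable $IH_4(4;3)$ exists over $\mathbb{Z}_{28}$. Since $v=28$ and the forbidden subgroup is $J=\{0,\pm 7,14\}$, the support of the array is $\{1,2,\dots,14\}\setminus\{7\}$. By strippability, the array admits a primary transversal with support $\{1,2,3,4\}$; removing it leaves a shiftable partial array with exactly two entries in each row and each column. Since $2$ is even, shiftability then forces each such row and column to contain one positive and one negative entry. After permuting rows and columns (and, if necessary, negating every entry), I would assume the transversal lies on the main diagonal, with $(i,i)$ containing $\epsilon_i\cdot i$ and $\epsilon_1=+1$.

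The central observation is that in each row $i$ the two non-transversal entries have opposite signs and sum to $-\epsilon_i i$, so their absolute values differ by exactly $i$; the same holds for each column. Because $7\notin$ support, the value $5$ cannot appear in any pair of absolute difference $2$, and hence cannot appear in row $2$ or column $2$. So the cell holding $5$ is off-diagonal with both coordinates in $\{1,3,4\}$. Transposing the array is a symmetry of the structure that preserves strippability, and under it the six candidate cells collapse to three representatives, which I would take to be $(1,3)$, $(1,4)$, and $(3,4)$.

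For each of the three representative cases I would (i) read off the row-pair and column-pair containing $5$, uniquely determined by the required differences (for instance, if $5$ is at $(1,3)$ then row $1$'s pair is $\{5,6\}$ and column $3$'s pair is $\{5,8\}$); (ii) deduce the possible cells for $6$, $8$ and $9$, which are the only admissible partners of $5$; and (iii) propagate the difference-$i$ constraints through rows $2,3,4$ and columns $1,2,4$. In each branch only a short tree of sub-cases arises (at most four, from the column of the partner of $5$ in its row and the row of its partner in its column), and in every leaf the argument should terminate with one of three failures: a pair of the required difference cannot be formed from the leftover values; some value would be forced to occupy two distinct cells; or the cells forced to be empty fail to form a derangement of $\{1,2,3,4\}$, leaving some column with two empties. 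I expect the main obstacle to be keeping the bookkeeping clean, particularly in the $(3,4)$ case where the subcases for the cell of $8$ and the cell of $9$ must each be traced through to contradiction; but each individual branch collapses in a few deductions. Ruling out all three representative positions for $5$ then establishes the lemma.
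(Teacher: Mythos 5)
Your overall strategy is sound and rests on the same key observation as the paper's proof: in row $i$ (and column $i$) the two non-transversal entries must have opposite signs and absolute values differing by exactly $i$, which together with the forbidden value $7$ sharply limits where $5$ can sit. However, there are two problems. First, a factual error in the setup: the support of an $IH_4(4;3)$ is $\{1,\dots,13\}\setminus\{7\}$, not $\{1,\dots,14\}\setminus\{7\}$, since $14\in J=\{0,\pm 7,14\}$ (your set has $13$ elements but the array has only $nk=12$ filled cells). Carried into the case analysis, the spurious value $14$ would create extra admissible partners (e.g.\ for $10$, $11$ and $13$) and extra branches, some of which might not close. Second, and more seriously, the exhaustive verification that constitutes the actual content of the lemma is only announced, not performed: phrases such as ``the argument should terminate'' and ``I expect the main obstacle to be keeping the bookkeeping clean'' defer precisely the step that needs to be exhibited. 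As written, the proposal is a plausible plan for a proof rather than a proof.

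For comparison, the paper organizes the same finite check at the level of unordered triples of absolute values rather than cell positions. Each row (column) yields a triple $\{i,a,b\}$ with $i\in\{1,2,3,4\}$ and $|a-b|=i$, and the four row triples (likewise the four column triples) must partition $\{1,\dots,13\}\setminus\{7\}$. Enumerating the admissible triples shows there are only four such partitions, and no two of them are orthogonal (some pair of triples meets in two or more elements), which contradicts the orthogonality inherited from the row and column Heffter systems. This formulation discards the positional and sign bookkeeping that your cell-by-cell propagation would have to track, and reduces the case analysis to a short, fully displayed list. If you want to salvage your version, you would need to correct the support and then actually write out each branch of the tree to its contradiction.
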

\begin{proof}

    Assume $H$ is a strippable $IH_4(4;3)$ then $H$ has a primary transversal. Using the row and column relative Heffter systems $D_r(28,3)$ and $D_c(28,3)$ respectively, form the systems $D'_i(28,3)=\{\{|x_1|,|x_2|,|x_3| \}  |  \{x_1,x_2,x_3\} \in D_i(28,3)\}$ for $i \in \{r,c\}$. Since $D_r(28,3)$ and $D_c(28,3)$ are orthogonal then $D'_r(28,3)$ and $D'_c(28,3)$ are also orthogonal. Given our assumption, no triple in $D'_i(28,3)$ contains a pair of elements from $\{1, \dots, 4\}$ and the union of the triples in $D'_i(28,3)$ must be the support $\{1, \dots, 14\}\setminus\{7,14\}$. Clearly $D'_r(28,3)$ and $D'_c(28,3)$ must each contain exactly one triple from each of the columns in Table \ref{tab:triples_in_IH443}.  

\begin{table}[ht!]

    \centering
    \begin{tabular}{|c|c|c|c|}\hline
    
        \textbf{1} & \textbf{2} & \textbf{3} & \textbf{4}\\\hline 
         \{1,5,6\}& \{2,6,8\} & \{3,5,8\} & \{4,5,9\}\\ \hline
        \{1,8,9\} & \{2,8,10\} &  \{3,6,9\}& \{4,6,10\}\\ \hline
        \{1,9,10\} & \{2,9,11\} &  \{3,8,11\}& \{4,8,12\} \\ \hline
         \{1,10,11\}&  \{2,10,12\}& \{3,9,12\} & \{4,9,13\}\\ \hline
         \{1,11,12\}& \{2,11,13\}  & \{3,10,13\} &  \\ \hline
         \{1,12,13\}&  & &  \\ \hline
    \end{tabular}
    \label{tab:triples_in_IH443}
    \caption{Possible triples in $D'_r(28,3)$ and $D'_c(28,3)$}
\end{table}
There are only 4 possible combinations of triples such that the union of the triples is $\{1, \dots , 14\}\setminus \{7,14\}$:\begin{align*}
\{\{1,5,6\},\{2,10,12\},\{3,8,11\},\{4,9,13\}\} \\ 
\{\{1,11,12\},\{2,6,8\},\{3,10,13\},\{4,5,9\}\}\\
\{\{1,5,6\},\{2,9,11\},\{3,10,13\},\{4,8,12\}\}\\
\{\{1,12,13\},\{2,9,11\},\{3,5,8\},\{4,6,10\}\}\\
\end{align*}
By inspection, no two sets are orthogonal thus $H$ cannot exist.
\end{proof}
If we remove the primary transversal condition an $IH_4(4;3)$ is simple to generate.
\begin{example}
The following array is an $IH_{4}(4;3)$ over $\mathbb{Z}_{28}$ without a primary transversal: 
           \label{fig:IH_{4}(4;3)}
\begin{figure}[H]
   \begin{table}[H]
     \centering 
          \begin{tabular}{|p{0.55cm}|p{0.55cm}|p{0.55cm}|p{0.55cm}|}\hline
                 10 & 1 &   & -11\\ \hline
                 -13 & 5 & 8 &  \\ \hline
                   & -6 & 4 & 2\\ \hline
                 3 &   & -12 & 9\\ \hline
           \end{tabular}
    \end{table}    
\end{figure}
\centering

\end{example}
Computations show that a strippable $IH_4(n;3)$ does exist for small even $n>4$ but it is yet unknown if the necessary conditions are sufficient for all even $n\not=4$.

If $t=5$, we can see that an $IH_5(n;3)$ exists only if $n\equiv 0,5 \pmod{20}$ by Corollary \ref{cor:neccesary_for_IH_t(n;3)}. While we cannot yet confirm if this is sufficient, a strippable $IH_5(5;3)$ does exist for the smallest possible $n=5$, unlike the case for $t=4$. The construction for an $IH_n(n;3)$ given in \cite{Costa2020} provides such an array.

\section{The case for \texorpdfstring{$t=6$}.}
\label{sec:t=6}
We now give two constructions for a strippable $IH_6(n;3)$ for $n \equiv 0,3 \pmod{4}$.  

For $ i \in \{1, \dots, n\},$ the $i^{th}$ diagonal, $D_i$, of a square array is defined as the set of cells $\{(i,1),(i+1,2), \dots , (i+n-1,n)\}$ with arithmetic performed modulo $n$. An $n \times n$ array is \textit{cyclically k-diagonal} if its nonempty cells comprise of exactly $k$ consecutive diagonals, $D_i,D_{i+1}, \dots ,D_{i+k-1}$ for some $i \in \{1, \dots, n\}$. Again, arithmetic is performed modulo $n$. The arrays constructed in Theorem \ref{th:IH_6(n;3) _constructions} are cyclically 3-diagonal as their nonempty cells are exactly $D_2, D_1,D_n$. We present the constructions using the standard notation $diag(r,c,s, \Delta_1, \Delta_2,l)$ \cite{Dinitz2017a} to fill in the cells of an array, $H$, where $H_{r+i\Delta_1,c+i\Delta_1}=s+i \Delta_2 \text{ for } i \in \{0, \cdots, l-1\}$. Row and columns indices are modulo $n$. The constructed array has the property that  $|H_{i,i+1}|+|H_{i+1,i}|=4n+4$, this is discussed further following Theorem \ref{th:IH_6(n;3) _constructions}. To highlight this property we use the notation $Z,Z'$ to identify the sequence pairs where $|z_i|+|z'_i|=4n+4$.

\begin{theorem}
\label{th:IH_6(n;3) _constructions}
A strippable $IH_6(n;3)$ exists if $n \equiv 0,3 \pmod{4}$. 
\end{theorem}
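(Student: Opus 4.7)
The plan is to exhibit, for each of $n \equiv 0 \pmod{4}$ and $n \equiv 3 \pmod{4}$, a cyclically 3-diagonal array with filled cells exactly on $D_2 \cup D_1 \cup D_n$, specified by a small number of $\mathrm{diag}(r,c,s,\Delta_1,\Delta_2,l)$ calls. I will place the primary transversal $\pm 1, \pm 2, \ldots, \pm n$ on the main diagonal $D_1$, which by Lemma \ref{lem:prim_trans=strippable} reduces strippability to checking that after stripping $D_1$ each row and column of $D_2 \cup D_n$ contains one positive and one negative entry.

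Writing $a_i = H_{i,i+1}$ (entry on $D_n$) and $b_i = H_{i+1,i}$ (entry on $D_2$), the row sum in row $i$ gives $b_{i-1} + H_{i,i} + a_i = 0$ and the column sum in column $i$ gives $a_{i-1} + H_{i,i} + b_i = 0$, so $a_i - b_i$ is constant in $i$. Combined with the pairing $|a_i| + |b_i| = 4n+4$ announced above, this forces uniform signs: all $a_i$ of one sign and all $b_i$ of the opposite sign, which is precisely the sign-balance needed for shiftability. The non-transversal support $\{n+2, \ldots, 3n+2\} \setminus \{2n+2\}$ partitions naturally into the $n$ pairs $\{a,\, 4n+4-a\}$, and the problem reduces to matching these pairs to the cell pairs $\{(i,i+1),(i+1,i)\}$ so that the resulting diagonal values $H_{i,i} = \pm\bigl(4n+4 - |a_{i-1}| - |a_i|\bigr)$ exhaust $\{\pm 1, \ldots, \pm n\}$, each absolute value appearing exactly once.

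My concrete plan is to prescribe $|a_i|$ piecewise by the residue of $i$ modulo $4$, so that each piece is an arithmetic progression and thus corresponds to one $\mathrm{diag}$ call per diagonal. The two congruence classes $n \equiv 0, 3 \pmod{4}$ require parallel but distinct progressions, since the parity of $n$ and the location of the excluded value $2n+2$ shift where each progression must start and end, and the sign pattern on $D_1$ differs accordingly. Verification then reduces to three mechanical checks: the absolute values on $D_2 \cup D_n$ cover $\{n+2,\ldots,3n+2\}\setminus\{2n+2\}$ without repetition, the induced values $H_{i,i}$ take each $\pm i$ for $i \in \{1,\ldots,n\}$ exactly once, and every row and column sums to zero as an integer.

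The main obstacle is bookkeeping: the forbidden absolute values $n+1$, $2n+2$, $3n+3$ carve gaps into the target support, so a naive length-$n$ arithmetic progression will almost always hit a forbidden value and has to be broken and patched near the wraparound indices $i = 1, n$. Getting the splice points and signs aligned across $D_1$, $D_2$, and $D_n$ simultaneously is the delicate part, and is what drives the case split on $n \bmod 4$. Small values of $n$ in each class (for which the progressions collapse or overlap) may need to be handled as base cases with ad hoc arrays.
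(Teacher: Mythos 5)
Your framework is sound and matches the paper's actual construction closely: the paper does build a cyclically 3-diagonal array on $D_2\cup D_1\cup D_n$ with the primary transversal $\{1,\dots,n\}$ on the main diagonal, with the pairing $|H_{i,i+1}|+|H_{i+1,i}|=4n+4$, with the off-diagonals uniformly signed (upper positive, lower negative), and with the off-diagonal sequences specified piecewise as arithmetic progressions via $\mathrm{diag}$ calls (the paper derives these from current assignments on ladder graphs). Your observation that $a_i-b_i$ is forced constant, hence the sign pattern is forced, is a nice structural remark the paper does not make explicit.

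However, there is a genuine gap: you never exhibit the construction. For an existence theorem proved by explicit construction, the explicit sequences \emph{are} the proof. You reduce the problem to ``matching the $n$ pairs $\{a,4n+4-a\}$ to the cell pairs $\{(i,i+1),(i+1,i)\}$ so that the induced diagonal values $4n+4-|a_{i-1}|-|a_i|$ exhaust $\{1,\dots,n\}$,'' and you correctly identify this matching --- together with the splice points around the forbidden values $n+1$, $2n+2$, $3n+3$ and the wraparound at $i=1,n$ --- as ``the delicate part.'' But that delicate part is precisely the mathematical content of the theorem, and it is not carried out: no progressions are written down, no starting values or lengths are given, the support check is not performed, and you even leave open whether small $n$ need ad hoc base cases. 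As it stands the proposal establishes that \emph{if} a suitable assignment of the pairs exists then the array is a strippable $IH_6(n;3)$, but gives no argument that such an assignment exists for every $n\equiv 0,3\pmod 4$; a counting or parity obstruction at this stage is not obviously ruled out (compare the paper's own nonexistence result for a strippable $IH_4(4;3)$, where exactly this kind of combinatorial matching fails). To close the gap you would need to write out the ten $\mathrm{diag}$ sequences (plus ad hoc cells) for each residue class and verify the row sums, column sums, and support, as the paper does in Tables \ref{tab:IH6(n3;3)_row_sum}--\ref{tab:IH6(n0;3)_column_sum} and the two support computations.
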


\begin{proof}

\textbf{Case $n \equiv 3 \pmod{4}$}\par
Consider the $n \times n$ array $H$ with $n \equiv 3 \pmod{4}$ constructed by the following:\par
\onehalfspacing{
$A:diag(1,1,-(n-1),1,1,\frac{n-1}{2})$\par
$B:diag(\frac{n+3}{2}, \frac{n+3}{2}, \frac{n-3}{2},1,-1,\frac{n-3}{2})$\par
$C:diag(1,n,-(n+2),2,-1,\frac{n+1}{4})$\par
$C':diag(n,1,3n+2,2,-1,\frac{n+1}{4})$\par
$D:diag(1,2,2n+1,2,-1,\frac{n+1}{4})$\par
$D':diag(2,1,-(2n+3),2,-1,\frac{n+1}{4})$\par
$E:diag(\frac{n+3}{2},\frac{n+1}{2},-\frac{11n+7}{4},2,1,\frac{n+1}{4})$\par
$E':diag(\frac{n+1}{2}, \frac{n+3}{2},\frac{5n+9}{4}, 2,1, \frac{n+1}{4})$\par
$F:diag(\frac{n+3}{2},\frac{n+5}{2},\frac{9n+13}{4},2,1,\frac{n-3}{4})$\par
$F':diag(\frac{n+5}{2}, \frac{n+3}{2}, -\frac{7n+3}{4},2,1, \frac{n-3}{4})$ \par}

Plus the adhoc values: \par
$H_{\frac{n+1}{2}, \frac{n+1}{2}}= n$, 
$H_{n,n}=-\frac{n-1}{2}$\\ \par

 An $IH_6(11,3)$ built using this construction is given in Example \ref{ex:IH_{6}(11;3)} alongside Figure \ref{fig:n=11_sequences} illustrating
 the general pattern.\\ \par 

\begin{minipage}{0.5 \textwidth}

\begin{example} An $IH_{6}(11;3)$ over $\mathbb{Z}_{72}$
   \begin{table}[H]
     \centering 
        \begin{adjustbox}{height=0.14\textheight}
      \begin{tabular}{|wc{0.35cm}|wc{0.35cm}|wc{0.35cm}|wc{0.35cm}|wc{0.35cm}|wc{0.35cm}|wc{0.35cm}|wc{0.35cm}|wc{0.35cm}|wc{0.35cm}|wc{0.35cm}|}\hline
                 -10 & 23 &   &   &   &   &   &   &   &   & -13 \\[1pt] \hline
                 -25 & -9 & 34 &   &   &   &   &   &   &   &  \\[1pt] \hline
                   & -14 & -8 & 22 &   &   &   &   &   &   &  \\ [1pt]\hline
                   &   & -26 & -7 & 33 &   &   &   &   &   &  \\[1pt] \hline
                   &   &   & -15 & -6 & 21 &   &   &   &   &  \\[1pt] \hline
                   &   &   &   & -27& 11 & 16 &   &   &   &  \\[1pt] \hline
                   &   &   &   &   & -32 & 4 & 28 &   &   &  \\ [1pt]\hline
                   &   &   &   &   &   & -20 & 3 & 17 &   &  \\ [1pt]\hline
                   &   &   &   &   &   &   & -31 & 2 & 29 &  \\ [1pt]\hline
                   &   &   &   &   &   &   &   & -19 & 1 & 18\\ [1pt]\hline
                 35 &   &   &   &   &   &   &   &   & -30 & -5\\ [1pt]\hline
           \end{tabular}
           \end{adjustbox}
    \end{table}
    \centering
    
           \label{ex:IH_{6}(11;3)}
\end{example}
\end{minipage}
\begin{minipage}{0.5\textwidth}

      \begin{figure}[H]
      \centering 
     \begin{adjustbox}
     {height=0.14\textheight}
      \begin{tabular}{|wc{0.35cm}|wc{0.35cm}|wc{0.35cm}|wc{0.35cm}|wc{0.35cm}|wc{0.35cm}|wc{0.35cm}|wc{0.35cm}|wc{0.35cm}|wc{0.35cm}|wc{0.35cm}|}\hline
          $A$&$D$&&&&&&&&&$C$\\ \hline
          $D'$&$A$&$C'$&&&&&&&&\\ \hline
          &$C$&$A$&$D$&&&&&&&\\ \hline
          &&$D'$&$A$&$C'$&&&&&&\\ \hline
          &&&$C$&$A$&$D$&&&&&\\ \hline
          &&&&$D'$&$n$&$E'$&&&&\\ \hline
          &&&&&$E$&$B$&$F$&&&\\ \hline
          &&&&&&$F'$&$B$&$E'$&&\\ \hline
          &&&&&&&$E$&$B$&$F$&\\ \hline
          &&&&&&&&$F'$&$B$&$E'$\\ \hline
          $C'$&&&&&&&&&$E$&\tiny{$-\frac{n-1}{2}$} \\ \hline
          \end{tabular}  
          \end{adjustbox}
    \caption{General construction of an $IH_t(n;3)$ for $n=11$}
     \label{fig:n=11_sequences}
          \end{figure}
          \vspace{-1.6cm}
\end{minipage}

 \vspace{1cm}
 We now prove this construction meets the required conditions for an integer relative Heffter array. We consider the row and column sums separated into cases. 
 In the following tables, $Z_i$ denotes the $i_{th}$ term in sequence $Z$. The entries in each row are given in Table \ref{tab:IH6(n3;3)_row_sum} and the column entries are given in Table \ref{tab:IH6(n;3)_column_sum}.  It is easily seen that the entries in each row and column sum to zero. 
 \begin{figure}[H]
    \begin{table}[H]
        \centering 
        \begin{adjustbox}{width=1\textwidth}
          \begin{tabular}{|l|l|l|l|}\hline
          \textbf{Rows} &\multicolumn{3}{c|}{\textbf{Row entries}}\\ \hline
          $odd \ r \in [1, \frac{n-1}{2}]$&$A_r= -(n-1)+r-1$&$D_{\frac{r+1}{2}} = (2n+1)-\frac{r-1}{2}$&$C_{\frac{r+1}{2}}= -(n+2)-\frac{r-1}{2}$ \\[5pt]
          \hline
          $even \ r \in [1, \frac{n-1}{2}]$&$D'_{\frac{r}{2}}=-(2n+3)-\frac{r-2}{2}$&$A_r=-(n-1)+r-1$&$C'_{\frac{r+2}{2}}=(3n+2)-(\frac{r-2}{2}+1)$\\ [5pt]\hline
           $\frac{n+1}{2}$&$D_{\frac{n+1}{4}}'=-(2n+3)-\left(\frac{n+1}{4}-1\right)$&$n$&$E'_1= \frac{5n+9}{4}$ \\[5pt] \hline
           
           $odd \ r \in [\frac{n+3}{2}$,$n-1]$& $E_{\frac{2r-n+1}{4}}=-\frac{11n+7}{4}+\frac{2r-n-3}{4}$&$B_\frac{2r-n-1}{2}=\frac{n-3}{2}-\frac{2r-n-3}{2}$&$F_{\frac{2r-n+1}{4}}=\frac{9n+13}{4}+\frac{2r-n-3}{4}$ \\ [5pt]\hline 
           $even \ r \in [\frac{n+3}{2}$,$n-1]$&$F'_{\frac{2r-n-1}{4}}=-\frac{7n+3}{4}+\frac{2r-n-5}{4}$ &$B_{\frac{2r-n-1}{2}}=\frac{n-3}{2}-(\frac{2r-n-5}{2}+1)$&$E'_{\frac{2r-n+3}{4}}=\frac{5n+9}{4}+(\frac{2r-n-5}{4}+1)$ \\ [5pt]\hline
           $n$&$C_1'=3n+2$&$E_{\frac{n+1}{4}}=-\left(\frac{11n+7}{4}\right)+\left(\frac{n+1}{4}-1\right)$&$\ -\frac{n-1}{2}$ \\[5pt] \hline
          \end{tabular}
          \end{adjustbox}
              \caption{Row entries for an $IH_6(n;3)$ for $n \equiv 3 \pmod{4}$}
              \label{tab:IH6(n3;3)_row_sum}
    \end{table}
\end{figure}

    \begin{figure}[H]
    \begin{table}[H]
        \centering 
        \begin{adjustbox}{width=1\textwidth}
          \begin{tabular}{|l|l|l|l|}\hline
          \textbf{Columns} &\multicolumn{3}{c|}{\textbf{Column entries}}\\ \hline
          $odd \ c \in [1, \frac{n-1}{2}]$&$A_c= -(n-1)+c-1$&$D'_{\frac{c+1}{2}} = -(2n+3)-\frac{c-1}{2}$&$C'_{\frac{c+1}{2}}= 3n+2-\frac{c-1}{2}$\\[2pt]
          \hline
          $even \ c \in [1, \frac{n-1}{2}]$&$D_{\frac{c}{2}}=(2n+1)-\frac{c-2}{2}$&$A_c=-(n-1)+c-1$&$C_{\frac{c+2}{2}}=-(n+2)-1-\frac{c-2}{2}$\\ [5pt]\hline
           $\frac{n+1}{2}$&$D_{\frac{n+1}{4}}=2n+1-\left(\frac{n+1}{4}-1\right)$&$n$&$E_1= -\frac{11n+7}{4}$ \\ [5pt]\hline
           
           $odd \ c \in [\frac{n+3}{2}$,$n-1]$& $E'_{\frac{2c-n+1}{4}}=\frac{5n+9}{4}+\frac{2c-n-3}{4}$&$B_\frac{2c-n-1}{2}=\frac{n-3}{2}-\frac{2c-n-3}{2}$&$F'_{\frac{2c-n+1}{4}}=-\frac{7n+3}{4}+\frac{2c-n-3}{4}$ \\ [5pt]\hline 
           
           $even \ c \in [\frac{n+3}{2}$,$n-1]$& $E_{\frac{2c-n+3}{4}}=-\frac{11n+7}{4}+1+\frac{2c-n-5}{4}$&$B_{\frac{2c-n-1}{2}}=\frac{n-3}{2}-1-\frac{2c-n-5}{2}$&$F_{\frac{2c-n-1}{4}}=\frac{9n+13}{4}+\frac{2c-n-5}{4}$\\ [5pt]\hline
           $n$&$C_1=-(n+2)$&$E'_{\frac{n+1}{4}}=\left(\frac{5n+9}{4}\right)+\left(\frac{n+1}{4}-1\right)$&$\ -\frac{n-1}{2}$ \\[5pt] \hline
          \end{tabular}
          \end{adjustbox}
              \caption{Column entries for an $IH_6(n;3)$ for $n \equiv 3 \pmod{4}$}
                \label{tab:IH6(n;3)_column_sum}
    \end{table}
\end{figure}

We now show that the construction gives the required support. We consider the union of the sets containing the absolute values for each sequence and the adhoc entries.

\begin{align*}
supp(H)
&=\\
\biggl\{
    &\underset{supp(B)}{\left[1,\frac{n-3}{2}\right]}, 
    \underset{H_{n,n}}{\frac{n-1}{2}},
    \underset{supp(A)}{\left[\frac{n+1}{2}, n-1\right]}, 
    \underset{H_{\frac{n+1}{2}, \frac{n+1}{2}}}{n},
    \underset{supp(C)}{\left[n+2, \frac{5n+5}{4} \right]},
    \underset{supp(E')}{\left[\frac{5n+9}{4},\frac{3n+3}{2}\right]},
    \\
    &\underset{supp(F')}{\left[\frac{3n+5}{2},\frac{7n+3}{4}\right]},
    \underset{supp(D)}{\left[\frac{7n+7}{4},2n+1\right]},
    \underset{supp(D')}{\left[2n+3,\frac{9n+9}{4} \right]},
    \underset{supp(F)}{\left[\frac{9n+13}{4},\frac{5n+3}{2} \right]},\\
    &\underset{supp(E)}{\left[\frac{5n+5}{2}, \frac{11n+7}{4}\right]},
    \underset{supp(C')}{\left[\frac{11n+11}{4}, 3n+2 \right]}
    \biggr\}\\
    &=[1,3n+2]\setminus\{n+1,2n+2\} \text{ as required} 
\end{align*}

 It is clear that the construction places three non-empty cells into each row and column thus $H$ is an $IH_6(n;3)$ for $n \equiv 0 \pmod {4}$. Furthermore, the main diagonal has support $\{1, \dots, n\}$ and is therefore a primary transversal. Since $k=3$, it follows from Lemma \ref{lem:prim_trans=strippable} that the Heffter array is strippable.\\

\textbf{Case for $n \equiv 0 \pmod{4}$}\\
Consider the $n \times n$ array $H$ with $n \equiv 0 \pmod{4}$ constructed by the following:\par
\onehalfspacing{
$A:diag(1,1,-(n-2),1,1,\frac{n-4}{2})$\par
$B:diag(\frac{n}{2},\frac{n}{2},\frac{n-2}{2},1,-1, \frac{n-4}{2})$\par
$C:diag((1,n,-(n+3),2,-1,\frac{n}{4})$\par
$C':diag(n,1,3n+1,2,-1,\frac{n}{4})$\par
$D:diag(1,2,2n+1,2,-1,\frac{n-4}{4})$\par
$D':diag(2,1,-(2n+3),2,-1,\frac{n-4}{4})$\par
$E:diag(\frac{n-2}{2},\frac{n}{2},\frac{9n+8}{4},2,1,\frac{n}{4})$\par
$E':diag(\frac{n}{2}, \frac{n-2}{2}, -\frac{7n+8}{4},2,1,\frac{n}{4})$\par
$F:diag(\frac{n+2}{2},\frac{n}{2},-\frac{11n+4}{4},2,1,\frac{n-4}{4})$\par
$F':diag(\frac{n}{2},\frac{n+2}{2}, \frac{5n+12}{4},2,1,\frac{n-4}{4})$}\par
\par Plus adhoc cells:\par
$H_{\frac{n-2} {2},\frac{n-2}{2}}=-n$, 
$H_{n-2,n-2}=-(n-1)$, 
$H_{n-2,n-1}=\frac{5n+4}{2}$, 
$H_{n-1,n-2}=-\frac{3n+4}{2}$, 
$H_{n-1,n-1}=\frac{n}{2}$,  
$H_{n-1,n}=n+2$,  
$H_{n,n-1}=-(3n+2)$,  
$H_{n,n}=1$.\par

\singlespacing
An $IH_6(12,3)$ built using this construction is given in Example \ref{ex:IH_{6}(12;3)}. Figure \ref{fig:n=12_sequences} illustrates the general pattern.

         \begin{example}
         An $IH_{6}(12;3)$ over $\mathbb{Z}_{79}$
          \begin{table}[H]
        \centering 
     \begin{adjustbox}{height=0.17\textheight}

     \begin{tabular}{|wc{0.35cm}|wc{0.35cm}|wc{0.35cm}|wc{0.35cm}|wc{0.35cm}|wc{0.35cm}|wc{0.35cm}|wc{0.35cm}|wc{0.35cm}|wc{0.35cm}|wc{0.35cm}|wc{0.35cm}|} \hline
   -10&25 &&&&&&&&&&-15\\ \hline
            -27&-9&36&&&&&&&&&\\ \hline
            &-16&-8&24&&&&&&&&\\ \hline
            &&-28&-7&35&&&&&&&\\ \hline
            &&&-17&-12&29&&&&&&\\ \hline
            &&&&-23&5&18&&&&&\\ \hline
            &&&&&-34&4&30&&&&\\ \hline
            &&&&&&-22&3&19&&&\\ \hline
            &&&&&&&-33&2&31&&\\ \hline
            &&&&&&&&-21&-11&32&\\ \hline
            &&&&&&&&&-20&6&14\\ \hline
            37&&&&&&&&&&-38&1\\ \hline
           \end{tabular}
           \end{adjustbox}
           
           \label{ex:IH_{6}(12;3)}      
 \end{table}
  \end{example}

\begin{figure}[H]

     \centering 
     \begin{adjustbox}
       {height=0.17\textheight}

     \begin{tabular}%
     {|wc{0.70cm}|wc{0.70cm}|wc{0.70cm}|wc{0.70cm}|wc{0.70cm}|wc{0.70cm}|wc{0.70cm}|wc{0.70cm}|wc{0.70cm}|wc{0.70cm}|wc{0.70cm}|wc{0.70cm}|} \hline
          $A$&$D$&&&&&&&&&&$C$ \\ \hline
          $D'$&$A$&$C'$&&&&&&&&&\\ \hline
          &$C$&$A$&$D$&&&&&&&&\\ \hline
          &&$D'$&$A$&$C'$&&&&&&&\\\hline
          &&&$C$&$-n$&$E$&&&&&&\\ \hline
          &&&&$E'$&$B$&$F'$&&&&&\\ \hline
          &&&&&$F$&$B$&$E$&&&&\\ \hline
          &&&&&&$E'$&$B$&$F'$&&&\\ \hline
          &&&&&&&$F$&$B$&$E$&& \\ \hline

          &&&&&&&&$E'$&\tiny$-(n-1)$&$\frac{5n+4}{2}$&  \\[3pt]  \hline
          &&&&&&&&&\tiny$-\frac{3n+4}{2}$&$\frac{n}{2}$&\tiny$n+2$  \\  \hline
          $C'$&&&&&&&&&&\tiny-$(3n+2)$&1  \\ \hline
          \end{tabular}  
           \end{adjustbox}
    \caption{General construction of an $IH_t(n;k)$ for $n=12$}
     \label{fig:n=12_sequences}
    \end{figure}

 Following the same approach as for $n \equiv 3 \pmod{4}$ we consider the row and column sums separated into cases. 
 The entries in each row are given in Table \ref{tab:IH6(n0;3)_row_sum} and the entries in each column are given in Table \ref{tab:IH6(n0;3)_column_sum}.  It is easily seen again that the entries in each row and column sum to zero. 

 \begin{figure}[H]
    \begin{table}[H]
        \centering 
        \begin{adjustbox}{width=1\textwidth}
          \begin{tabular}{|l|l|l|l|}\hline
          \textbf{Rows} &\multicolumn{3}{c|}{\textbf{Row entries}}\\ \hline
          $odd \ r \in [1, \frac{n-4}{2}]$&$A_r= -(n-2)+r-1$&$D_{\frac{r+1}{2}} = 2n+1-\frac{r-1}{2}$&$C_{\frac{r+1}{2}}= -(n+3)-\frac{r-1}{2}$\\[5pt]
          \hline
          $even \ r \in [1, \frac{n-4}{2}]$&$D'_{\frac{r}{2}}=-(2n+3)-\frac{r-2}{2}$&$A_r=-(n-2)+r-1$&$C'_{\frac{r+2}{2}}=3n+1-1-\frac{r-2}{2}$\\ \hline
           $\frac{n-2}{2}$&$C_{\frac{n}{4}}=-(n+3)-\left(\frac{n}{4}-1\right)$&$-n$&$E_1= \frac{9n+8}{4}$ \\ [5pt]\hline
           
           $odd \ r \in [\frac{n}{2},n-3]$& $F_{\frac{2r-n+2}{4}}=-\frac{11n+4}{4}+\frac{2r-n-2}{4}$&$B_\frac{2r-n+2}{2}=\frac{n-2}{2}-1-\frac{2r-n-2}{2}$&$E_{\frac{2r-n+6}{4}}=\frac{9n+8}{4}+1+\frac{2r-n-2}{4}$ \\ [5pt]\hline 
           $even \ r \in [\frac{n}{2},n-3]$& $E'_{\frac{2r-n+4}{4}}=-\frac{7n+8}{4}+\frac{2r-n}{4}$&$B_{\frac{2r-n+2}{2}}=\frac{n-2}{2}-\frac{2r-n}{2}$&$F'_{\frac{2r-n+4}{4}}=\frac{5n+12}{4}+\frac{2r-n}{4}$\\ [5pt]\hline        
           $n-2$&$E_{\frac{n}{4}}=-\frac{7n+8}{4}+(\frac{n}{4}-1)$&$-(n-1)$&$\frac{5n+4}{2}$\\ [5pt]\hline
           $n-1$&$-\frac{3n+4}{2}$&$\frac{n}{2}$&$n+2$\\ [5pt]\hline
           $n$&$C_1'=3n+1$&$\ -(3n+2)$&$\ 1$ \\ [5pt]\hline
          \end{tabular}
          \end{adjustbox}
              \caption{Row entries for an $IH_6(n;3)$ for $n \equiv 0 \pmod{4}$}
              \label{tab:IH6(n0;3)_row_sum}
    \end{table}
\end{figure}

 \begin{figure}[H]
    \begin{table}[H]
        \centering 
        \begin{adjustbox}{width=1\textwidth}
          \begin{tabular}{|l|l|l|l|}\hline
          \textbf{Columns} &\multicolumn{3}{c|}{\textbf{Column entries}}\\ \hline
          $odd \ c \in [1, \frac{n-4}{2}]$&$A_c= -(n-2)+c-1$&$D'_{\frac{c+1}{2}} = -(2n+3)-\frac{c-1}{2}$&$C'_{\frac{c+1}{2}}= 3n+1-\frac{c-1}{2}$\\ [5pt]
          \hline
          $even \ c \in [1, \frac{n-4}{2}]$&$D_{\frac{c}{2}}=(2n+1)-\frac{c-2}{2}$&$A_c=-(n-2)+c-1$&$C_{\frac{c+2}{2}}=-(n+3)-1-\frac{c-2}{2}$\\ [5pt]\hline
           $\frac{n-2}{2}$&$C'_{\frac{n}{4}}=3n+1-\left(\frac{n}{4}-1\right)$&$-n$&$E'_1= -\frac{7n+8}{4}$ \\ [5pt]\hline
           
           $odd \ c \in [\frac{n}{2}$,$n-3]$& $F'_{\frac{2c-n+2}{4}}=\frac{5n+12}{4}+\frac{2c-n-2}{4}$&$B_\frac{2c-n+2}{2}=\frac{n-2}{2}-1-\frac{2c-n-2}{2}$&$E'_{\frac{2c-n+6}{4}}=-\frac{7n+8}{4}+1+\frac{2c-n-2}{4}$ \\ [5pt]\hline 
           
           $even \ c \in [\frac{n}{2},n-3]$& $E_{\frac{2c-n+4}{4}}=\frac{9n+8}{4}+\frac{2c-n}{4}$&$B_{\frac{2c-n+2}{2}}=\frac{n-2}{2}-\frac{2c-n}{2}$&$F_{\frac{2c-n+4}{4}}=-\frac{11n+4}{4}+\frac{2c-n}{4}$\\[5pt] \hline
           $n-2$&$E_{\frac{n}{4}}=\frac{9n+8}{4}+\left(\frac{n}{4}-1 \right)$&$-(n-1)$&$-\frac{3n+4}{2}$\\ \hline
           $n-1$&$\frac{5n+4}{2}$&$\frac{n}{2}$&$-(3n+2)$\\ [5pt]\hline
           $n$&$C_1=-(n+3)$&$n+2$&$1$ \\ [5pt]\hline
          \end{tabular}
          \end{adjustbox}
              \caption{Column entries for an $IH_6(n;3)$ for $n \equiv 0 \pmod{4}$}
              \label{tab:IH6(n0;3)_column_sum}
    \end{table}
\end{figure}

By considering the union of the sets containing the absolute values of each sequence and the adhoc entries we show that the array gives the required support.

\begin{align*}
supp(H)
&=\\
\biggl\{
    &\underset{H_{n,n}}{1}, 
    \underset{supp(B)}{\left[2, \frac{n-2}{2}\right]}, 
    \underset{H_{n-1,n-1}}{\frac{n}{2}},
    \underset{supp(A)}{\left[\frac{n+2}{2},n-2\right]},
    \underset{H_{n-2,n-2}}{n-1},
    \underset{H_{\frac{n-2}{2},\frac{n-2}{2}}}{n},
    \underset{H_{n-1,n}}{n+2},\underset{supp(C)}{\left[n+3,\frac{5n+8}{4} \right]},
    \\
    &\underset{supp(F')}{\left[\frac{5n+12}{4}, \frac{3n+2}{2}\right]},
    \underset{H_{n-1,n-2}}{\frac{3n+4}{2}},
    \underset{supp(E')}{\left[\frac{3n+6}{2}, \frac{7n+8}{4}\right]},
    \underset{supp(D)}{\left[\frac{7n+12}{4}, 2n+1\right]},
    \underset{supp(D')}{\left[2n+3, \frac{9n+4}{4}\right]},\\
    &
    \underset{supp(E)}{\left[\frac{9n+8}{4}, \frac{5n+2}{2}\right]},
   \underset{H_{n-2,n-1}}{\frac{5n+4}{2}},
    \underset{supp(F)}{\left[\frac{5n+6}{2}, \frac{11n+4}{4}\right]},
    \underset{supp(C')}{\left[\frac{11n+8}{4}, 3n+1\right]},
    \underset{H_{n,n-1}}{3n+2},
    \biggr\}\\
    &=[1,3n+2]\setminus\{n+1,2n+2\} \text{ as required.}
\end{align*}

Clearly the construction places three non-empty cells into each row and column thus $H$ is an $IH_6(n;3)$ for $n \equiv 0 \pmod {4}$. Furthermore, the main diagonal has support $\{1, \dots, n\}$ and is therefore a primary transversal. Since $k=3$, it follows from Lemma \ref{lem:prim_trans=strippable} that the Heffter array is strippable.

\end{proof}

In \cite{Archdeacon2015a} the authors used existing integer current assignments introduced in \cite{Youngs1970} to construct an $IH(n;3)$ for $n \equiv 0 \pmod{4}$ and $n \equiv 1 \pmod{4}$. M\"obius and cylindrical current ladder graphs were used with the set of assigned currents forming $\mathbb{Z}_{2nk+1} \setminus \{0\}$. Motivated by this previous work, the constructions presented in the proof of Theorem \ref{th:IH_6(n;3) _constructions} were developed by creating current assignments for ladder graphs with $n \equiv 0,3 \pmod{4}$ rungs with the set of assigned currents being $\mathbb{Z}_{6n+6} \setminus \{n+1,2n+2,3n+3\}$. It is worth noting that the resulting Heffter arrays have a negative lower diagonal and a positive upper diagonal as well as the main diagonal having support $\{1, \dots,n\}$. Along with the property that $|H_{i,i+1}|+|H_{i+1,i}|=4n+4$, these properties are a direct result of the underlying current ladder graph's properties. In particular, the graph is constructed such that the set of currents assigned to the rungs is $\{1, \dots, n\}$. These values then map to the main diagonal of the resulting Heffter array. For a more detailed discussion regarding the construction of current graphs the reader is referred to the original paper \cite{Youngs1970} and to the useful constructions provided in \cite{Anderson1982}.\par

\begin{theorem}
\label{th:IH_6(n;3)_iff_n=0,3}
    A (strippable) $IH_6(n;3)$ exists if and only if $n \equiv 0,3 \pmod{4}$. 
 \end{theorem}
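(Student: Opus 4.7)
The plan is to combine the constructions of Theorem \ref{th:IH_6(n;3) _constructions} with the necessary conditions of Corollary \ref{cor:neccesary_for_IH_t(n;3)}. Sufficiency is immediate: for both $n \equiv 0 \pmod{4}$ and $n \equiv 3 \pmod{4}$, the explicit arrays constructed in Theorem \ref{th:IH_6(n;3) _constructions} are already strippable (the main diagonal has support $\{1,\dots,n\}$ and so is a primary transversal, and Lemma \ref{lem:prim_trans=strippable} finishes the job). This simultaneously establishes both the existence claim and the parenthetical ``(strippable)'' part of the statement.

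For necessity, I would specialise Corollary \ref{cor:neccesary_for_IH_t(n;3)} to $t=6$ and split on the parity of $n$. If $n$ is even then $2 \mid n$, hence $6 \mid 3n$ and case (1) of the corollary applies; since $6 \equiv 2 \pmod{4}$, neither ``$n,t \equiv 1 \pmod{4}$'' nor ``$n,t \equiv 3 \pmod{4}$'' can hold, forcing $n \equiv 0 \pmod{4}$. If instead $n$ is odd then $3n$ is odd and $6 \nmid 3n$, so case (2) applies and yields $6 \equiv 2n \pmod{8}$, equivalently $n \equiv 3 \pmod{4}$; the auxiliary condition $t \neq 6n$ is automatic for $n \geq 3$. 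Together the two subcases show that any $IH_6(n;3)$ forces $n \equiv 0,3 \pmod{4}$.

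There is no real obstacle to overcome here: all of the substance has been absorbed into Theorem \ref{th:IH_6(n;3) _constructions} on the existence side and Corollary \ref{cor:neccesary_for_IH_t(n;3)} on the non-existence side. The role of this theorem is simply to package those two results into a clean biconditional, with the bonus that ``strippable'' can be inserted for free because the constructions deliver a primary transversal.
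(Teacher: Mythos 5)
Your proposal is correct and follows the same route as the paper: sufficiency via the explicit strippable constructions of Theorem \ref{th:IH_6(n;3) _constructions}, and necessity by specialising Corollary \ref{cor:neccesary_for_IH_t(n;3)} to $t=6$ (your parity case split just makes explicit the deduction the paper leaves implicit). No gaps.
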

 \begin{proof}
   Corollary \ref{cor:neccesary_for_IH_t(n;3)} provides the necessary condition that an $IH_6(n;3)$ exists only if $n \equiv 0,3 \pmod{4}$. Theorem \ref{th:IH_6(n;3) _constructions} shows that these conditions are also sufficient.

 \end{proof}
As a consequence of Theorem \ref{th:IH_6(n;3)_iff_n=0,3} we can now prove Theorem \ref{th:IH_t(n;3)_n_prime_exists}.

\begin{proof} \textit{(of Theorem \ref{th:IH_t(n;3)_n_prime_exists}) }
\par By definition, $t$ must divide $2nk=6n$. Since $t \not= 6n$ by Corollary \ref{cor:neccesary_for_IH_t(n;3)} and $n$ is prime, $t$ must necessarily belong to $\{1,2,3,6,n,2n,3n\}$. 
\begin{itemize}
    \item $n\equiv 1 \pmod{4}$\\
    As summarised in Table \ref{tab:summary_existence_r_sq_int_HA}, existing results prove there is no $IH_t(n;3)$ for $t=3,3n$ \cite{Costa2020b} and Theorem \ref{th:IH_6(n;3)_iff_n=0,3} shows an $IH_6(n;3)$ cannot exist. Thus necessarily $t \in \{1,2,n,2n\}$.\par These conditions are sufficient for $n \equiv 1  \pmod{4}$, with the construction of an $IH_t(n;3)$ when $t=1,2$  being given in \cite{Archdeacon2015a} and for $t=n,2n$ given in \cite{Costa2020} as stated in Table  \ref{tab:summary_existence_r_sq_int_HA}.\\

\item $n\equiv 3 \pmod{4}$\\
    Referring to Table \ref{tab:summary_existence_r_sq_int_HA} we can see an $IH_t(n;3)$  does not exist for $t=1,2$ \cite{Archdeacon2015a} nor for $t=3n$\cite{Costa2020b}. Thus it is necessary that $t\in \{3,6,n,2n\}$.\par
    The existence of an $IH_t(n;3)$ for $t=3$ is given in \cite{Costa2020b} and for $t=n,2n$ in \cite{Costa2020}. To complete the proof, Theorem \ref{th:IH_6(n;3) _constructions} proves the existence of an $IH_t(n;3)$.
\end{itemize}
\end{proof}

 \bibliographystyle{abbrv}
\bibliography{./bibliography.bib}

\end{document}